\newcommand{\A}{\mathbf{A}}
\newcommand{\Q}{\mathbf{Q}}
\newcommand{\Z}{\mathbf{Z}}
\newcommand{\G}{\mathbb{G}}
\newcommand{\sF}{\mathcal{F}}
\newcommand{\fm}{\mathfrak{m}}
\renewcommand{\phi}{\varphi}
\newcommand{\inj}{\hookrightarrow}
\newcommand{\surj}{\rightarrow\!\!\!\!\!\rightarrow}
\newcommand{\Surj}{\relbar\joinrel\surj}
\newcommand{\by}{\xrightarrow}
\newcommand{\iso}{\by{\sim}}
\renewcommand{\lim}{\varprojlim}
\newcommand{\et}{{\operatorname{\acute{e}t}}}
\newcommand{\Nis}{{\operatorname{Nis}}}
\newcommand{\Zar}{{\operatorname{Zar}}}
\newcommand{\oc}{{\operatorname{oc}}}
\newcommand{\eff}{{\operatorname{eff}}}
\renewcommand{\o}{{\operatorname{o}}}
\newcommand{\car}{\operatorname{char}}
\newcommand{\Sm}{\operatorname{\mathbf{Sm}}}
\newcommand{\SLoc}{\operatorname{\mathbf{sLoc}}}
\newcommand{\Spec}{\operatorname{Spec}}
\newcommand{\uHom}{\operatorname{\underline{Hom}}}
\newcommand{\Ker}{\operatorname{Ker}}
\newcommand{\Coker}{\operatorname{Coker}}
\newcommand{\PST}{\operatorname{\mathbf{PST}}}
\newcommand{\HI}{\operatorname{\mathbf{HI}}}
\newcommand{\DM}{\operatorname{\mathbf{DM}}}
\numberwithin{equation}{section}
\newcounter{spec}
\newenvironment{thlist}{\begin{list}{\rm{(\roman{spec})}}%
{\usecounter{spec}\labelwidth=20pt\itemindent=0pt\labelsep=10pt}}%
{\end{list}}
\newtheorem{Th}{Theorem}
\newtheorem{thm}{Theorem}[section]
\newtheorem{prop}[thm]{Proposition}
\newtheorem{lemma}[thm]{Lemma}
\theoremstyle{definition}
\theoremstyle{remark}
\newtheorem{rk}[thm]{Remark}
\begin{document}

\title{Multiplicative properties of the multiplicative group}
\author{Bruno Kahn}
\address{IMJ-PRG\\Case 247\\
4 place Jussieu\\
75252 Paris Cedex 05\\
France}
\email{bruno.kahn@imj-prg.fr}
\date{November 2, 2017}

\begin{abstract}
We give a few properties equivalent to the Bloch-Kato conjecture (now the norm residue isomorphism theorem).\end{abstract}

\subjclass[2010]{19D45, 14C15 (19E15)}

\maketitle

\section*{Introduction} 

The Bloch-Kato conjecture, now called the norm residue isomorphism theorem, was finally proven by  Voevodsky in 2011 \cite{voeann}, using key inputs from Rost. The proof has many ramifications and involves a combination of sophisticated motivic techniques, including motivic Steenrod operations, and results of a more combinatorial kind like the existence of norm varieties. 

This state of the art gives some interest to the issue of finding a more elementary proof. In this direction, one can consider the early work of Thomason on inverting the Bott element in algebraic $K$-theory \cite{thomason} as a ``stable'' version of the conjecture; Levine later gave a motivic version of Thomason's theorem in \cite{levine}. I wondered how close to the norm residue isomorphism theorem the latter work takes us; the result is the following theorem, which was obtained in 2009.

\begin{Th}\label{T1} Let $k$ be an infinite perfect field and let $l$ be a prime number invertible in
$k$. If
$l=2$, assume that $k$ is non-exceptional in the sense of Harris-Segal: the Galois group of the
extension $k(\mu_{2^\infty})/k$ is torsion-free.  Then the following statements are equivalent:
\begin{thlist}
\item\label{i} The Beilinson-Lichtenbaum conjecture holds modulo $l$ over $k$.
\item\label{ii}  For all $n\ge 1$ and all $i>0$, $H_i(\G_m\otimes K_n^M/l)=0$. Here the tensor product is taken  in $\DM^\eff$.
\item\label{iii} For any $n\ge 2$, any function field $K/k$, any semi-local $K$-algebra $A$ and  any ideals $I,J\subset A$ with $I\cap J=0$, the map
\[K_n^M(A)/l\to K_n^M(A/I)/l\oplus K_n^M(A/J)/l\]
is injective.
\item\label{iv} Same as (iii), for $A$ the coordinate ring of $\hat{\Delta}^q_{K,S}$ for all $q\ge 2$ and all $\emptyset\neq S\subseteq [0,q]$ and $I,J$ defined by sets of vertices.
\end{thlist}
\end{Th}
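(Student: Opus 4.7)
\bigskip

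\noindent\textbf{Plan.} I would prove the theorem via the cycle (\ref{i}) $\Rightarrow$ (\ref{ii}) $\Rightarrow$ (\ref{iii}) $\Rightarrow$ (\ref{iv}) $\Rightarrow$ (\ref{i}). The implication (\ref{iii}) $\Rightarrow$ (\ref{iv}) is immediate since (\ref{iv}) is a specialization, so three substantive implications remain.

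For (\ref{i}) $\Rightarrow$ (\ref{ii}), I combine two standard identifications: Nesterenko--Suslin--Totaro gives $K_n^M/l \simeq \mathcal{H}^n(\Z/l(n))$ as Nisnevich sheaves with transfers, and in $\DM^\eff$ one has $\G_m \simeq \Z(1)[1]$. The Beilinson--Lichtenbaum conjecture says $\Z/l(n) \iso \tau_{\le n} R\alpha_* \mu_l^{\otimes n}$, where $\alpha\colon (\Sm/k)_\et \to (\Sm/k)_\Nis$ is the change of site; the étale Künneth isomorphism $\mu_l \otimes^L \mu_l^{\otimes n} \simeq \mu_l^{\otimes (n+1)}$ is elementary. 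Pushing forward through $R\alpha_*$ and truncating yields $\G_m \otimes^L K_n^M/l \simeq K_{n+1}^M/l[1]$ in $\DM^\eff$, concentrated in a single degree, giving $H_i = 0$ for $i > 0$.

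For (\ref{ii}) $\Rightarrow$ (\ref{iii}), I would resolve $\G_m$ by a two-term complex of representable Nisnevich sheaves with transfers and tensor with $K_n^M/l$, producing an explicit complex whose homology on a semi-local ring $A$ involves quotients of Milnor K-theory of $A$ and its residue algebras. Given $A$ semi-local with $I \cap J = 0$, the closed cover $\Spec(A/I) \sqcup \Spec(A/J) \to \Spec(A)$ fits into a Mayer--Vietoris-style triangle in $\DM^\eff$, and the long exact sequence of homology identifies the kernel of the map in (\ref{iii}) with a subquotient of $H_1(\G_m \otimes K_n^M/l)(A)$, which vanishes under (\ref{ii}).

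For (\ref{iv}) $\Rightarrow$ (\ref{i}), the decisive step, I would invoke Levine's motivic refinement of the Bloch--Lichtenbaum spectral sequence from \cite{levine}, which converges to $H^*(K, \Z/l(n))$ and whose $E_1$-page is expressed in terms of Milnor K-groups of the semi-localized simplices $\hat{\Delta}^q_{K,S}$ with differentials given by face restrictions to vertex-defined subschemes. The hypothesis (\ref{iv}) supplies precisely the injectivities that cancel these differentials, forcing the abutment to be $K_n^M(K)/l$; by Levine's theorem this is equivalent to Beilinson--Lichtenbaum modulo $l$ over $k$. The main obstacle lives here: one must organize the cosimplicial bookkeeping so that only the \emph{pairwise} vertex-indexed Mayer--Vietoris statements are required, matching the exact shape of (\ref{iv}) rather than a Mayer--Vietoris for arbitrary disjoint ideals; and the $l = 2$ case needs the Harris--Segal non-exceptionality hypothesis so that Thomason's Bott-inverted $K$-theory input into Levine's argument behaves as in the odd-$l$ case.
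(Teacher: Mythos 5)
Your overall arrangement as a cycle (i) $\Rightarrow$ (ii) $\Rightarrow$ (iii) $\Rightarrow$ (iv) $\Rightarrow$ (i) differs from the paper, which proves (i) $\Rightarrow$ (iii) and (iv) $\Rightarrow$ (i) directly, and obtains (i) $\Leftrightarrow$ (ii) and the remaining implications from an auxiliary chain of equivalences (Proposition \ref{c5.1}). That is not a defect per se, but it changes which direct arguments are needed, and two of yours have real problems.

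The most serious gap is in (iv) $\Rightarrow$ (i). You invoke ``Levine's motivic refinement of the Bloch--Lichtenbaum spectral sequence from \cite{levine}, whose $E_1$-page is expressed in terms of Milnor K-groups of the semi-localized simplices $\hat\Delta^q_{K,S}$.'' No such spectral sequence appears in \cite{levine}: that paper is about inverting the motivic Bott element (Theorem \ref{tlevine} here), and it has no $E_1$-page of that form. The actual mechanism in the paper is quite different. First, Proposition \ref{c5.1} establishes (using Levine's Bott-inversion theorem \emph{and} the birational localization functor $\nu_{\le 0}\colon \DM^\eff\to\DM^\o$ of \cite{birat3}, via the quasi-isomorphism \eqref{eq3.3}) that Beilinson--Lichtenbaum modulo $l$ is equivalent to the vanishing $H_q(K_n^M/l(\hat\Delta_K^*))=0$ for all $q\ge 0$, $n\ge 2$, and all function fields $K$. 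Second, an elementary closed Mayer--Vietoris for Milnor $K$-theory of semi-local rings (Proposition \ref{p3.1}, which rests only on Lemmas \ref{l3.1} and \ref{l3.2}), iterated to $r$-fold covers by Lemma \ref{l5.3}, combined with hypothesis (iv) and the purely simplicial Lemma \ref{l6.1}, produces exactly this vanishing. Your sketch has no analogue of the passage through $\DM^\o$ and \eqref{eq3.3}, which is what turns a homological vanishing over semi-local rings into a statement about Nisnevich sheaf cohomology over all smooth schemes. Without that bridge, (iv) has no leverage on (i).

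For (i) $\Rightarrow$ (ii), your appeal to the \'etale K\"unneth plus ``pushing forward through $R\alpha_*$ and truncating'' glosses over the central difficulty that $R\alpha_*$ does not commute with the tensor product in $\DM^\eff$; indeed, controlling exactly that failure is what Levine's Bott-inversion theorem is for. The paper handles this via the cone $C_n$ of \eqref{eq2.1}, the right $t$-exactness of $\otimes$, and Proposition \ref{p1}. For (ii) $\Rightarrow$ (iii), the paper never proves this implication directly; instead (i) $\Rightarrow$ (iii) is a short, separate argument using Kerz's semi-local norm residue isomorphism \cite[Th.~1.2]{kerz} (which presupposes Bloch--Kato, hence can only be used \emph{after} assuming (i)) and a closed Mayer--Vietoris in \'etale cohomology. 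Your plan to read off the kernel of the map in (iii) from $H_1(\G_m\otimes K_n^M/l)(A)$ via a ``two-term resolution'' of $\G_m$ again needs a mechanism for passing from vanishing of a Nisnevich-sheaf homology to sections over a semi-local ring, i.e.\ the same birational-localization input you omit in (iv) $\Rightarrow$ (i); it is not clear the sketch as written would close up. In short: the broad strategy (relate BL, motivic tensor vanishing, and semi-local Milnor K of the cosimplicial scheme) is aligned with the paper, but you are missing the two load-bearing tools: Proposition \ref{c5.1}'s birational reinterpretation, and the elementary Lemmas \ref{l3.1}--\ref{l3.2}, \ref{l5.3}, \ref{l6.1} that replace the spectral sequence you posit.
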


Here are some explanations on the notation. We assume the reader familar with Voevodsky's category $\DM^\eff$ of effective motivic complexes \cite{voetri,mvw,bevo}; in (ii) and later, $H_i$ is relative to its homotopy $t$-structure. The Beilinson-Lichtenbaum conjecture is recalled at the end of \S \ref{s3}: it is equivalent to the Bloch-Kato conjecture by \cite{gl,sv}. If $A$ is a commutative semi-local ring, we write $K_*^M(A)$ for the Milnor ring of $A$ in the na\"\i ve sense, i.e. the quotient of the tensor algebra $T(A^*)$ by the two-sided ideal generated by elements $a\otimes (1-a)$ with $a,1-a\in A^*$. We shall write $K_n^M$ for the associated Nisnevich sheaf on the category $\Sm$ of smooth separated $k$-schemes of finite type. 

In (iv), we write $\hat\Delta_K^*$ for the
cosimplicial $K$-scheme whose $q$-th term $\hat\Delta_K^q$ is the semi-localisation of
$\Delta^q_K=\Spec K[t_0,\dots,t_q]/(\sum t_i = 1)$ at its vertices. If $i\in [0,q]$ (resp. $S\subseteq [0,q]$), we write $\hat\Delta^q_{K,i}$ for the $i$-th face of $\hat\Delta_K^q$ and $\hat\Delta^q_{K,S}=\bigcup_{i\in S} \hat\Delta^q_{K,i}$.  We shall also write $\partial_i$ for the inclusion $\hat\Delta^q_{K,i}\inj \hat\Delta^q_{K}$ ($i$-th face map), and $\hat\Delta^q_{K,[0,q]}=:\partial \hat\Delta^q_K$.

Of course, all statements in Theorem \ref{T1} are true since the first one is. The game we shall play here, however, is to forget about this fact and prove the equivalences without using it. Statement (ii) explains the title of this note. It is possible that such vanishing holds in more generality, which would be one possible direction of attack for a more elementary proof of \cite{voeann}. The scant evidence in this direction is a remarkable theorem of Sugiyama \cite[Prop. A.1]{ahew} that the tensor product of Nisnevich sheaves of $\Q$-vector spaces with transfers is exact. The most appealing leads are of course (iii) and (iv), because of their seemingly elementary nature. When I came up with Theorem \ref{T1}, I tried to prove either of these statements by using the techniques of  Guin and Nesterenko-Suslin in \cite{guin,ns}, but was not successful. 

(Added in November 2017.) When I sent this paper to Voevodsky in June 2017, he answered:

\begin{quote} \it I can not say that I knew this particular result, but I have encountered some facts of a similar nature and even tried to prove some of them. Without any success\dots\ It is strange that the existing proof is the only one known.

I am, BTW, partially in connection with my current interests, very interested in the elimination of the non-constructive elements from the proof of the BK or, at least, from the proof of the Merkurjev-Suslin theorem about $K_2/l.$

The main such element is the use of the axiom of choice or rather of the existence of well-ordering on any set quite early in the proof.

I am very interested in finding a proof that avoids this part of the argument.

(....)

I am sure that I can formalize constructively the statement of the BK. I can also formalize constructively most of my mathematics such as the motivic Steenrod operations.
\end{quote}

This was a few months before his death on September 30th, 2017. It will take time for many of us to recover from it.

\enlargethispage*{20pt}
\section{Proof of {\rm (i)} $\Rightarrow$ {\rm (iii)}}\label{s1}

Recall that the Bloch-Kato conjecture is a special case of the Beilinson-Lichtenbaum conjecture; the statement thus follows from:

\begin{prop}\label{p3.2} We assume the Bloch-Kato conjecture holds modulo $l$. Let $A$ be a semi-local $k$-algebra.  Let $I,J$ be two ideals of $A$ such that $I\cap J=0$. If $n\ge 2$, the homomorphism
\[K_n^M(A)/l\to K_n^M(A/I)/l\oplus K_n^M(A/J)/l\]
is injective.
\end{prop}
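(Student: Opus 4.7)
The plan is to translate the assertion into \'etale cohomology via Bloch--Kato, and then derive it from a Mayer--Vietoris long exact sequence attached to the closed-cover decomposition of $\Spec A$ arising from $I\cap J=0$.

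First, under Bloch--Kato -- equivalently Beilinson--Lichtenbaum -- combined with Kerz's theorem that, for an infinite field $k$, $K_n^M(R)/l$ of a semi-local $k$-algebra $R$ computes the top-weight Nisnevich motivic cohomology $H^n_\Nis(R,\Z/l(n))$, the Galois symbol yields an isomorphism
\[K_n^M(R)/l \iso H^n_\et(R,\mu_l^{\otimes n}).\]
Thus the claim reduces to the injectivity of
\[H^n_\et(A,\mu_l^{\otimes n}) \to H^n_\et(A/I,\mu_l^{\otimes n}) \oplus H^n_\et(A/J,\mu_l^{\otimes n}).\]

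The hypothesis $I\cap J = 0$ makes $\Spec A$ the scheme-theoretic union of the closed subschemes $\Spec(A/I)$ and $\Spec(A/J)$, with intersection $\Spec(A/(I+J))$ (equivalently, $A=A/I\times_{A/(I+J)} A/J$). For any \'etale sheaf $\mathcal F$ on $\Spec A$, a stalk check produces the short exact sequence
\[0 \to \mathcal F \to (i_I)_* i_I^*\mathcal F \oplus (i_J)_* i_J^*\mathcal F \to (i_{I+J})_* i_{I+J}^*\mathcal F \to 0\]
(diagonal followed by difference). Applied to $\mu_l^{\otimes n}$, this yields a Mayer--Vietoris long exact sequence
\[\cdots \to H^{n-1}_\et(A/(I+J),\mu_l^{\otimes n}) \by{\delta} H^n_\et(A,\mu_l^{\otimes n}) \to H^n_\et(A/I,\mu_l^{\otimes n}) \oplus H^n_\et(A/J,\mu_l^{\otimes n}) \to \cdots.\]

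The desired injectivity is equivalent to $\delta = 0$, i.e.\ to surjectivity of
\[H^{n-1}_\et(A/I,\mu_l^{\otimes n}) \oplus H^{n-1}_\et(A/J,\mu_l^{\otimes n}) \to H^{n-1}_\et(A/(I+J),\mu_l^{\otimes n}).\]
This surjectivity is the main obstacle: restriction along a closed immersion is not in general surjective on \'etale cohomology. A plausible attack re-invokes Beilinson--Lichtenbaum in degree $n-1 < n$ to identify the target with $H^{n-1}_\Nis(A/(I+J),\Z/l(n))$, after which the Gersten filtration should reduce the question to lifting $K^M_{n-1}$-classes -- ultimately units -- along the surjections $A/I\surj A/(I+J)$ and $A/J\surj A/(I+J)$. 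The hypothesis $n\ge 2$ is used precisely here, so that $n-1\ge 1$ leaves room for this $K$-theoretic analysis.
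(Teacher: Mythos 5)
Your framework is the paper's: use Kerz's theorem plus Bloch--Kato to rewrite $K_n^M(-)/l$ as $H^n_\et(-,\mu_l^{\otimes n})$, invoke the closed Mayer--Vietoris sequence for the cover $\Spec A = \Spec(A/I)\cup\Spec(A/J)$ (which is indeed a consequence of proper base change for closed immersions), and reduce to showing the boundary map $\delta$ vanishes, i.e.\ that restriction is surjective in degree $n-1$. Up to this point you agree with the paper exactly.

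But you then flag the surjectivity of
$H^{n-1}_\et(A/I,\mu_l^{\otimes n}) \oplus H^{n-1}_\et(A/J,\mu_l^{\otimes n}) \to H^{n-1}_\et(A/(I+J),\mu_l^{\otimes n})$
as ``the main obstacle'' and offer only a sketch (``the Gersten filtration should reduce the question to lifting $K^M_{n-1}$-classes \dots\ ultimately units''), so a genuine gap remains. Two ingredients you omit are what actually close it in the paper. First, a transfer argument reduces to the case $\mu_l\subset k$; this is needed so that the twist $\mu_l^{\otimes n}$ becomes harmless and one can identify $H^{n-1}_\et(R,\mu_l^{\otimes n})\simeq H^{n-1}_\et(R,\mu_l^{\otimes (n-1)})\otimes\mu_l \simeq K^M_{n-1}(R)/l\otimes\mu_l$ for each semi-local $R$ in sight, again by Kerz and Bloch--Kato in degree $n-1$ (this is where $n\ge 2$ enters). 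Second, once in Milnor $K$-theory, surjectivity of $K^M_{n-1}(A/I)\to K^M_{n-1}(A/(I+J))$ (and likewise for $A/J$) is elementary and needs no Gersten filtration: for a surjection of semi-local rings units lift (since $1+\mathrm{Jac}\subset$ units), hence the Milnor ring maps surjectively --- this is exactly Lemma \ref{l3.1}(i)/\ref{l3.2} of the paper. Supplying the transfer reduction and the unit-lifting argument would turn your proposal into a complete proof matching the paper's.
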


\begin{proof} By Kerz \cite[Th. 1.2]{kerz}, the norm residue homomorphism
\[K_n^M(A)/l\to H^n_\et(A,\mu_l^{\otimes n})\]
is bijective for $n\ge 1$. By the usual transfer argument \cite[Def. 5.5]{kerz}, we may assume that $\mu_l\subset k$.  Recall that \'etale cohomology with finite coefficients verifies closed Mayer-Vietoris, as a consequence of proper base change (for closed immersions!). Consider the diagram
\[\begin{CD}
K_{n-1}^M(A/I)\otimes \mu_l\oplus K_{n-1}^M(A/J)\otimes \mu_l@>>> H^{n-1}_\et(A/I,\mu_l^{\otimes n})\oplus H^{n-1}_\et(A/J,\mu_l^{\otimes n})\\
@VaVV @VVV\\
K_{n-1}^M(A/I+J)\otimes \mu_l@>>> H^{n-1}_\et(A/I+J,\mu_l^{\otimes n})\\
&& @V{\partial}VV\\
K_n^M(A)/l @>>> H^n_\et(A,\mu_l^{\otimes n})\\
@VbVV @VVV\\
K_n^M(A/I)/l\oplus K_n^M(A/J)/l@>>> H^n_\et(A/I,\mu_l^{\otimes n})\oplus H^n_\et(A/J,\mu_l^{\otimes n})
\end{CD}\]
where the horizontal maps are norm residue isomorphisms and $\partial$ is the boundary map for the long exact sequence corresponding to the closed covering $\Spec A=\Spec(A/I)\cup \Spec(A/J)$. The two squares obviously commute, and all horizontal maps are isomorphisms since $n\ge 2$. But $a$ is surjective, hence $\partial = 0$, hence $b$ is injective.
\end{proof}

\begin{rk} This proof does not work for $n=1$. In fact the conclusion is false: the short exact sequence
\[0\to A^*\to (A/I)^*\oplus (A/J)^*\to (A/I+J)^*\to 0\]
yields a long exact sequence
\begin{multline*}
0\to {}_lA^*\to {}_l(A/I)^*\oplus {}_l(A/J)^*\by{\rho} {}_l(A/I+J)^*\\
\to A^*/l\to (A/I)^*/l\oplus (A/J)^*/l\to (A/I+J)^*/l\to 0
\end{multline*}
so $\Coker\rho$ is finite but may be nontrivial if $A/I+J$ is too disconnected.
\end{rk}

\section{Motivic cohomology and Milnor $K$-theory}

For $n\ge 0$, the $n$-th motivic complex of Suslin and Voevodsky may be defined as
\[\Z(n) = C_*(\G_m^{\wedge n})[-n]\]
where $\G_m^{\wedge n}$ denotes the direct summand of $L((\A^1-0)^n)$ given by sections trivial at $(\A^1-0)^i\times\{1\} \times (\A^1-0)^{n-i-1}$ ($0\le i<n$) and $C_*$ is the Suslin complex \cite[Th. 15.2]{mvw}. We have the following basic results:

\begin{thm}[\protect{\cite{sv}, \cite[Th. 1.1]{kerz}}]\label{tsv} We have $\Z(0)=\Z$, $\Z(1)\simeq \G_m[-1]$, $H^i(\Z(n))=0$ for $i>n$ and $H^n(\Z(n))=K_n^M$.
\end{thm}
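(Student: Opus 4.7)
The easy parts come first. For $\Z(0)$: by definition $\Z(0) = C_*(L(\Spec k)) = C_*(\Z)$, and since the constant sheaf with transfers $\Z$ is $\A^1$-invariant, its Suslin complex reduces to $\Z$ placed in degree zero. The vanishing $H^i(\Z(n)) = 0$ for $i > n$ is formal: $C_*(\sF)$ has cohomology sheaves concentrated in non-positive cohomological degrees, so $\Z(n) = C_*(\G_m^{\wedge n})[-n]$ has cohomology sheaves in degrees $\le n$.

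The substance is the identification $H^n(\Z(n)) = K_n^M$, equivalently $H^0(C_*(\G_m^{\wedge n})) = K_n^M$ as Nisnevich sheaves on $\Sm$. The plan is first to construct a natural map $K_n^M \to H^n(\Z(n))$: given units $a_1,\dots,a_n\in A^*$ on a smooth semi-local $k$-algebra $A$, each $a_i$ gives a morphism $\Spec A \to \G_m$ and thereby a finite correspondence, whose external product lies in $L(\G_m^n)(A)$; projecting onto the direct summand $\G_m^{\wedge n}$ and taking the class modulo the image of the first face differential of the Suslin complex produces an element of $H^n(\Z(n))(A)$. One checks that the resulting assignment is multilinear and alternating, and annihilates the Steinberg relation $\{a,1-a\}$, the last point via an explicit correspondence on $\Spec A \times (\A^1\setminus\{0,1\})$ interpolating between $\{a,1-a\}$ and $0$. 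Then show the map is an isomorphism, in two stages: over a field $F$, this is the Nesterenko-Suslin-Totaro theorem \cite{ns}, proved by an explicit cycle-theoretic analysis on $\Delta^\bullet_F \times \G_m^n$ identifying the cokernel of the simplicial differential with $K_n^M(F)$; over a semi-local ring, extend by \cite{kerz}, using a Gersten-type resolution for both $K_n^M$ and for $H^n(\Z(n))$ on smooth local schemes and comparing them term by term.

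Finally, $\Z(1) \simeq \G_m[-1]$ follows by combining the case $n=1$ above, which gives $H^1(\Z(1)) = K_1^M = \G_m$, with the complementary vanishing $H^i(\Z(1)) = 0$ for $i<1$. The latter amounts to the classical Suslin-Voevodsky computation that the natural map $\G_m \to C_*(\G_m^{\wedge 1})$ is a quasi-isomorphism, i.e. that $\G_m$ is already $\A^1$-local as a sheaf with transfers. The main obstacle is the field case of the Nesterenko-Suslin-Totaro theorem: producing a cycle-theoretic inverse to the symbol map and verifying geometrically that the alternating and Steinberg relations hold requires a fairly intricate manipulation of algebraic cycles on $\Delta^\bullet_F \times \G_m^n$, and is where the combinatorics of Milnor $K$-theory is genuinely embedded into the Suslin complex.
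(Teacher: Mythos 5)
The paper does not prove this statement: it is Theorem~\ref{tsv} recalled with citations to Suslin--Voevodsky \cite{sv} and Kerz \cite{kerz}, and no proof is given in the text. Your sketch is a reasonable and essentially accurate account of how the result is established in those references: $\Z(0)=\Z$ and the vanishing in degrees $>n$ are formal from the definition of the Suslin complex; the identification $H^n(\Z(n))=K_n^M$ over fields is the Nesterenko--Suslin(--Totaro) theorem, and the upgrade to a statement of Nisnevich sheaves on $\Sm$ uses the Gersten resolution for Milnor $K$-theory, which is exactly Kerz's contribution. Your plan for the symbol map and the Steinberg check is the right shape.

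One point is misstated rather than missing. For $\Z(1)\simeq\G_m[-1]$ you write that the vanishing of $H^i(\Z(1))$ for $i<1$ ``amounts to \dots the natural map $\G_m\to C_*(\G_m^{\wedge 1})$ being a quasi-isomorphism, i.e.\ that $\G_m$ is $\A^1$-local.'' The $\A^1$-locality of the sheaf of units $\G_m$ gives $C_*(\G_m)\simeq\G_m[0]$, but that is not the complex appearing in $\Z(1)$: there one has $C_*(\G_m^{\wedge 1})$, the Suslin complex of the reduced \emph{free} sheaf with transfers on $\A^1\setminus 0$. The natural comparison map goes the other way, $\G_m^{\wedge 1}\to\G_m$ (push a finite correspondence to the norm of a unit), inducing $C_*(\G_m^{\wedge 1})\to C_*(\G_m)=\G_m$. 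Showing that this is a quasi-isomorphism is the nontrivial computation (MVW~\cite[Th.~4.1]{mvw}, via relative Picard groups), and it does not follow from $\A^1$-invariance of $\G_m$ alone. This is a phrasing slip in a plan rather than a fatal error, but if you carry the plan out it is the part of $\Z(1)\simeq\G_m[-1]$ that actually requires work.
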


\section{Inverting the motivic Bott element, after Thomason and Levine}\label{s3}

Assume that $k$ contains a primitive $l$-th root of unity: the Nisnevich sheaf $\mu_l$ is then constant, cyclic of order $l$. From the exact triangle
\begin{equation}\label{eq6.3}
\mu_l[0]\to \Z/l(1)\to \G_m/l[-1]\by{+1}
\end{equation}
and the isomorphism $\Z/l(n)\otimes \Z/l(1)\iso \Z/l(n+1)$, we get a map in $\DM^\eff$:
\begin{equation}\label{eq2.1}
\Z/l(n)\otimes \mu_l\to \Z/l(n+1)
\end{equation}
hence another map
\[\Z/l(n)\to \Z/l(n+1)\otimes \mu_l^{-1}\]
which becomes an isomorphism after sheafifying for the \'etale topology. Let $i\le n$: iterating, we get a commutative diagram in $\HI$, the heart of the homotopy $t$-structure of $\DM^\eff$:

\[\begin{CD}
H^i(\Z/l(n)) &\to&
H^i(\Z/l(n+1)\otimes\mu_l^{-1})&\to&
H^i(\Z/l(n+2)\otimes\mu_l^{-2})&\to&\dots\\ 
@V{\psi_n^i}VV @V{\psi_{n+1}^i}VV @V{\psi_{n+2}^i}VV\\
H^i(R\alpha_*\alpha^*\Z/l(n)) &\iso&
H^i(R\alpha_*\alpha^*(\Z/l(n+1)\otimes\mu_l^{-1}))&\iso&
H^i(R\alpha_*\alpha^*(\Z/l(n+2)\otimes\mu_l^{-2}))&\iso&\dots 
\end{CD}\]
where $\alpha$ is the projection $\Sm_\et\to \Sm_\Nis$. We have:

\begin{thm}[\protect{\cite[Th. 1.1]{levine}}]\label{tlevine}  Assume that $k$ is non exceptional if $l=2$. Then the direct limit of the above diagram is a (vertical) isomorphism.
\end{thm}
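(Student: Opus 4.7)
The plan is to identify the colimit of each row with étale cohomology and to verify the vertical maps match under this identification. The argument splits naturally into three steps.

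First I analyze the bottom row. Because we have arranged $\mu_l\subset k$, the étale sheaf $\mu_l$ is constant, and after applying $\alpha^*$ the map (\ref{eq2.1}) becomes an isomorphism $\alpha^*\Z/l(n)\otimes \mu_l \simeq \alpha^*\Z/l(n+1)$. Thus every horizontal arrow in the bottom row is an isomorphism, as already indicated in the diagram, and the colimit is canonically the Nisnevich sheaf $U\mapsto H^i_\et(U,\mu_l^{\otimes n})$ for any fixed $n$.

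Next, for the top row, the task is to identify the colimit $\varinjlim_n H^i(\Z/l(n)\otimes \mu_l^{-n})$ with étale cohomology. The system is obtained by iterating multiplication by the motivic Bott element $\tau \colon \Z/l \to \Z/l(1)\otimes \mu_l^{-1}$ induced by the chosen primitive $l$-th root of unity, so the colimit is the $i$-th cohomology sheaf of the ``Bott-inverted'' mod-$l$ motivic complex. To prove this agrees with the étale side, it suffices to check that this Bott-inverted complex satisfies étale hyperdescent, since $\alpha^*$ automatically inverts $\tau$ and the comparison map $\Z/l(n)\to R\alpha_*\alpha^*\Z/l(n)$ factors through the Bott-localization.

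The main obstacle is establishing this étale descent. One route, essentially that of Thomason, is to transport the statement from algebraic $K$-theory --- where inverting the Bott element yields étale $K$-theory \cite{thomason} --- to motivic cohomology via the motivic Atiyah--Hirzebruch spectral sequence, checking compatibility of the Bott classes on the two sides. A more intrinsic motivic route uses a rigidity theorem combined with Galois descent along the cyclotomic tower $k(\mu_{l^\infty})/k$. Here the non-exceptional hypothesis at $l=2$ is essential: otherwise the $2$-torsion in $\operatorname{Gal}(k(\mu_{2^\infty})/k)$ gives rise to persistent Galois cohomology classes that the Bott element cannot annihilate, destroying descent (the classical real-closed-field pathology from \cite{thomason}). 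Granted étale descent, both colimits compute étale cohomology and the vertical comparison is an isomorphism.
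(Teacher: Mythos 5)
The statement you are asked to prove is not proven in the paper at all: the author quotes it verbatim as \cite[Th.~1.1]{levine} (Levine's ``Inverting the motivic Bott element''), and adds only the parenthetical remark that Levine's hypothesis at $l=2$ ($\car k>0$ or $\sqrt{-1}\in k$) can be relaxed to ``non-exceptional'', since that is the hypothesis his proof actually uses. So there is no in-paper argument to compare against; the paper's ``proof'' is a citation.

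Your sketch does correctly set up the problem: you identify the bottom row's colimit with the derived pushforward $R\alpha_*\alpha^*$ of $\mu_l^{\otimes n}$ (which is immediate since those horizontal maps are already isomorphisms), you correctly describe the top row's colimit as Bott-inverted mod-$l$ motivic cohomology, and you correctly reduce the theorem to showing that the Bott-inverted complex satisfies \'etale (hyper)descent. Your explanation of why non-exceptionality at $l=2$ is needed --- the real/exceptional field pathology where $2$-torsion in $\operatorname{Gal}(k(\mu_{2^\infty})/k)$ produces unbounded Galois cohomology that Bott inversion cannot kill --- is also the right intuition. However, the ``main obstacle'' you name is precisely the content of Levine's theorem, and you do not establish it: you offer two one-sentence pointers (transport Thomason's theorem through a motivic Atiyah--Hirzebruch spectral sequence; or rigidity plus Galois descent along the cyclotomic tower) and then write ``Granted \'etale descent\dots''. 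Both routes are serious undertakings --- the first needs the motivic spectral sequence and a nontrivial compatibility of Bott classes, the second needs a rigidity argument and a convergent descent spectral sequence with bounded cohomological dimension after passing up the cyclotomic tower --- and neither is carried out. As written, this is a correct roadmap pointing at \cite{levine} and \cite{thomason}, not a proof; which is in fact the same status as the paper, so if your intent was merely to explain why the cited theorem is plausible, the sketch is serviceable, but it should not be presented as a proof of Theorem~\ref{tlevine}.
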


(For $l=2$, Levine assumes either $\car k>0$ or that $k$ contains a square root of $-1$, but the
hypothesis he actually uses is that $k$ is not exceptional.)  

The Beilinson-Lichtenbaum conjecture is the statement that $\psi_n^i$ is an isomorphism for all $(i,n)$ such that $i\le n$. Hence Theorem \ref{tlevine} implies:

\begin{prop}\label{p1}  Under the assumption of Theorem \ref{tlevine}, the  Beilinson-Lichtenbaum conjecture holds modulo $l$ if and only if the map
\[H^i(\Z/l(n))\otimes\mu_l \to H^i(\Z/l(n+1))\]
is an isomorphism for any $(i,n)$ such that $i\le n$.\qed
\end{prop}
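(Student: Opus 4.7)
The plan is to extract both implications directly from the commutative diagram displayed just before Theorem \ref{tlevine}. The preliminary observation is that $\mu_l$ is invertible for the tensor product on $\Z/l$-linear objects of $\HI$: one inverse is $\mu_l^{\otimes(l-1)}$, and more cleanly $\mu_l$ becomes constant equal to $\Z/l$ after restricting to the étale site. Consequently tensoring with $\mu_l^{\pm 1}$ is an auto-equivalence, and the map in the statement
\[H^i(\Z/l(n))\otimes\mu_l \to H^i(\Z/l(n+1))\]
is an isomorphism if and only if its $\mu_l^{-1}$-twist
\[H^i(\Z/l(n)) \to H^i(\Z/l(n+1)\otimes \mu_l^{-1})\]
is, and the latter is exactly the top-row transition map in the displayed diagram.

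For $(\Rightarrow)$, I would use that the Beilinson-Lichtenbaum conjecture makes every vertical $\psi_m^i$ with $i\le m$ an isomorphism, while the bottom-row transition maps are isomorphisms by construction (this is part of the diagram). A two-out-of-three argument in each commutative square then forces every top-row map with $i\le m$ to be an isomorphism, giving the stated condition.

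For $(\Leftarrow)$, fix $(i,n)$ with $i\le n$. Since $i\le m$ for every $m\ge n$, the hypothesis makes the entire top row starting at index $n$ consist of isomorphisms; hence the canonical map from $H^i(\Z/l(n))$ to the colimit of that row is an isomorphism. Theorem \ref{tlevine} identifies this colimit with $H^i(R\alpha_*\alpha^*\Z/l(n))$ (equivalently, the common value of the bottom row, whose transition maps are already isomorphisms), and by commutativity of the diagram the resulting composite $H^i(\Z/l(n))\to H^i(R\alpha_*\alpha^*\Z/l(n))$ is exactly $\psi_n^i$. Hence $\psi_n^i$ is an isomorphism, which is the Beilinson-Lichtenbaum conjecture for $(i,n)$.

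There is no real obstacle: both directions are short diagram chases once Theorem \ref{tlevine} is in hand. The only thing to watch is the bookkeeping of the $\mu_l^{-1}$-twists on the top row, which is harmless precisely because $\mu_l$ is invertible.
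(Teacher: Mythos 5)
Your proposal is correct and coincides with what the paper leaves implicit (the proposition is stated with an immediate \qed): twist by $\mu_l^{-1}$ to identify the maps in the statement with the top-row transition maps, then run a two-out-of-three argument in each square for the forward direction, and pass to the colimit and invoke Theorem \ref{tlevine} for the converse. Both directions are exactly the intended diagram chases.

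One small inaccuracy in a side remark: you write that an inverse of $\mu_l$ is $\mu_l^{\otimes(l-1)}$, but $\mu_l^{\otimes(l-1)}$ is canonically $\Z/l$ (the Galois action factors through $(\Z/l)^\times$, which has order $l-1$), so the tensor inverse is $\mu_l^{\otimes(l-2)}$. This does not affect your argument, since all you use is that $-\otimes\mu_l$ is an autoequivalence; and in fact in the paper's running hypothesis for \S\ref{s3} one has $\mu_l\subset k$, so the Nisnevich sheaf $\mu_l$ is already constant $\cong\Z/l$ and invertibility is trivial without appealing to the \'etale site at all.
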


\section{Reformulation of Proposition \ref{p1}}

\begin{prop}\label{c5.1} a)  For all $n\ge 0$, the objects $\G_m\otimes K_{n}^M/l$ and
$\Z(n)[n]\otimes \G_m/l$ of $\DM^\eff$ are concentrated in cohomological degrees $\le
0$ (for the homotopy
$t$-structure), and we have isomorphisms
\[H^0(\G_m\otimes K_{n}^M/l)\simeq H^0(\Z(n)\otimes \G_m/l[n])\simeq K_{n+1}^M/l.\]
b) Assume that $k$ is non exceptional if $l=2$. Then the following statements are equivalent:
\begin{thlist}
\item The  Beilinson-Lichtenbaum conjecture holds modulo $l$.
\item For all $n\ge 1$, $\Z(n)\otimes \G_m/l\iso K_{n+1}^M/l[-n]$
in $\DM^\eff$.
\item  For all $n\ge 1$, $\G_m\otimes K_{n}^M/l\iso K_{n+1}^M/l[0]$ in $\DM^\eff$.
\item For all $n\ge 2$, the image of $K_n^M/l[0]$ under the localisation functor
$\nu_{\le0}:\DM^\eff\allowbreak\to
\DM^\o$ of \cite[(4.5)]{birat3} is $0$, where $\DM^\o$ is the category of birational motivic sheaves of \cite{birat3}.
\item For any function field $K/k$, any $n\ge 2$ and any $q\ge 0$, we have
\[H_q(K_n^M/l(\hat\Delta_K^*))=0.\]
\end{thlist}
\end{prop}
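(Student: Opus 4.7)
The plan is as follows. For Part (a), by Theorem \ref{tsv} applied to $n+1$, the complex $\Z/l(n+1)[n+1] = \Z(n)[n]\otimes\G_m\otimes\Z/l$ lies in cohomological degrees $\le 0$ with $H^0 = K_{n+1}^M/l$. To replace $\G_m\otimes\Z/l$ by the sheaf $\G_m/l$, tensor the distinguished triangle $\mu_l[1]\to \G_m\otimes\Z/l \to \G_m/l$ (a rotation of \eqref{eq6.3}) with $\Z(n)[n]$; since $\Z(n)[n]\otimes\mu_l[1]$ lies in degrees $\le -1$ by right $t$-exactness of the derived tensor product, the bound and the computation of $H^0$ transfer to $\Z(n)[n]\otimes\G_m/l$. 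The analogous statement for $\G_m\otimes K_n^M/l$ is obtained by tensoring the truncation triangle $\tau_{\le -1}\Z(n)[n]\to \Z(n)[n]\to K_n^M[0]$ with $\G_m/l$.

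For Part (b), I would prove the cycle (i)$\Leftrightarrow$(ii)$\Leftrightarrow$(iii)$\Leftrightarrow$(iv)$\Leftrightarrow$(v). For (i)$\Leftrightarrow$(ii): after reducing to the case $\mu_l\subset k$ by the usual Kummer-type transfer, the triangle from Part (a) specializes to
\[\Z/l(n)[n+1] \to \Z/l(n+1)[n+1] \to \Z(n)[n]\otimes\G_m/l \to \]
with first arrow the shift of the Bott-element map $\Z/l(n)\to \Z/l(n+1)$. Combined with the vanishing $H^j(\Z/l(n))=0$ for $j>n$ from Theorem \ref{tsv}, the long exact cohomology sequence shows that the cofiber is concentrated in degree $0$ with value $K_{n+1}^M/l$ if and only if the Bott maps $H^j(\Z/l(n))\to H^j(\Z/l(n+1))$ are isomorphisms for all $j\le n$; by Proposition \ref{p1} this is the Beilinson--Lichtenbaum conjecture mod $l$. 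For (ii)$\Leftrightarrow$(iii): since $\Z(1) = \G_m[-1]$, one has $\G_m^{\otimes n} = \Z(n)[n]$ in $\DM^\eff$, and $K_1^M = \G_m$; iterating (iii) gives
\[K_{n+1}^M/l \cong \G_m \otimes K_n^M/l \cong \dots \cong \G_m^{\otimes n}\otimes \G_m/l = \Z(n)[n]\otimes\G_m/l,\]
which is (ii); the converse substitutes $K_n^M/l = \Z(n-1)[n-1]\otimes\G_m/l$ (from (ii) for $n-1$) into $\G_m\otimes K_n^M/l$ and invokes (ii) for $n$.

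The equivalences (iii)$\Leftrightarrow$(iv)$\Leftrightarrow$(v) rely on the structure of the birational localization from \cite{birat3}. By construction $\nu_{\le 0}$ annihilates every Tate twist $\G_m\otimes X$, so (iii) implies (iv) immediately; the converse invokes the explicit description of $\nu_{\le 0}$ in \cite[(4.5)]{birat3}, which, combined with the $H^0$-identification of Part (a), upgrades the natural map $\G_m\otimes K_{n-1}^M/l\to K_n^M/l$ into an isomorphism in $\DM^\eff$. Finally, (iv)$\Leftrightarrow$(v) follows from the description in \cite{birat3} of morphisms in $\DM^\o$ via semi-local simplicial Suslin complexes: $\nu_{\le 0}(F[0])=0$ is equivalent to acyclicity of $F(\hat\Delta_K^*)$ for every function field $K/k$. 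The main obstacle lies in this last cluster of equivalences: while (i)--(iii) follow formally from Theorem \ref{tsv}, Proposition \ref{p1} and the identity $\G_m^{\otimes n}=\Z(n)[n]$, the steps involving (iv) and (v) require concrete input from \cite{birat3} on the kernel of $\nu_{\le 0}$ and its realization via semi-local simplicial schemes.
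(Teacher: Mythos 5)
Your outline follows the same skeleton as the paper: part (a) via Theorem \ref{tsv}, $\Z(1)\simeq\G_m[-1]$ and right $t$-exactness; (i)$\Leftrightarrow$(ii) by comparing the cone of the Bott map with $\Z(n)[n]\otimes\G_m/l$ and invoking Proposition \ref{p1}; (ii)$\Leftrightarrow$(iii) by shuffling a $\G_m$ factor; and (iv)$\Leftrightarrow$(v) by the semi-local simplicial description of $\nu_{\le0}$. Two points deserve comment.

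First, a minor slip in (a): tensoring $\tau_{\le -1}\Z(n)[n]\to\Z(n)[n]\to K_n^M[0]$ with $\G_m/l$ produces a bound and an $H^0$-computation for $K_n^M\otimes\G_m/l$, not for $\G_m\otimes K_n^M/l$ --- the mod-$l$ reduction sits on the wrong factor. These are a priori different objects of $\DM^\eff$. The intended argument (and the paper's) is simply right $t$-exactness of $\otimes$ applied directly to $\G_m\otimes K_n^M/l$: $H^0(\G_m\otimes K_n^M/l)\simeq\G_m\otimes_{\HI}(K_n^M/l)\simeq(\G_m\otimes_{\HI}K_n^M)/l\simeq K_{n+1}^M/l$, the middle step using right exactness of $\otimes_{\HI}$ and the last $H^0(\Z(n+1)[n+1])=K_{n+1}^M$. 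This is easy to fix but worth getting exactly right.

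Second, and this is the genuine gap: your (iv)$\Rightarrow$(iii). You say the ``explicit description of $\nu_{\le0}$, combined with the $H^0$-identification of Part (a), upgrades the natural map $\G_m\otimes K_{n-1}^M/l\to K_n^M/l$ into an isomorphism.'' This cannot work as stated. Applying $\nu_{\le0}$ to the triangle $\G_m\otimes K_{n-1}^M/l\to K_n^M/l[0]\to D_n$ gives no information, since $\nu_{\le0}$ already kills the source (it kills every Tate twist) and kills the target by hypothesis; hence it also kills $D_n$ --- but $\nu_{\le0}$ kills many nonzero objects, so this does not force $D_n=0$. The paper's argument is different: by \cite[Prop.~4.2.5]{birat3}, (iv) is equivalent to $K_n^M/l\simeq C(1)$ for \emph{some} $C\in\DM^\eff$; the key step is then \emph{Voevodsky's cancellation theorem}, which identifies $C\simeq\uHom(\Z(1),K_n^M/l)=(K_n^M/l)_{-1}[1]=K_{n-1}^M/l[1]$, yielding $K_n^M/l\simeq\G_m\otimes K_{n-1}^M/l$, i.e.\ (iii). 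Without cancellation you have no way to pin down the abstract $C$. You should state this invocation explicitly; it is the essential nonformal input in this cluster of equivalences, and the point where the proof uses a deep theorem rather than bookkeeping.

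For (iv)$\Leftrightarrow$(v), you also gloss over the passage from $R\Gamma(\hat\Delta_K^*,\sF)$ to the naive simplicial complex $\sF(\hat\Delta_K^*)$; this needs the vanishing $H^q_\Nis(X,\sF)=0$ for $q\ne0$ on smooth semi-local $X$ (\cite[Th.~4.37]{voepre}). A small omission, but worth recording.
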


\begin{proof} a) follows from Theorem \ref{tsv}, the isomorphism $\Z(1)\simeq \G_m[-1]$ and the right $t$-exactness of $\otimes$ \cite[comment after (5.2)]{birat3}.
b) We reduce to $\mu_l\subset k$.  Let $C_n$ be the cone of \eqref{eq2.1}, so that $C_n\simeq \Z(n)\otimes \G_m/l[-1]$. In view of a) and Proposition \ref{p1}, (i) is equivalent to saying that $C_n$ is concentrated in degree $n+1$ and that the map
\[K_{n+1}^M/l\simeq H^{n+1}(\Z/l(n+1))\to H^{n+1}(C_n)\]
is an isomorphism. This shows that (i) $\iff$ (ii).

The identity
\[\Z(n)\otimes \G_m/l
\simeq \G_m\otimes \Z(n-1)\otimes (\G_m/l)[-1]\]
shows that (ii) $\iff$ (iii) by induction on $n$ (note that (ii) and (iii) are identical for $n=1$). 

By \cite[Prop. 4.2.5]{birat3}, the statement in (iv) is equivalent to $K_n^M/l$ being divisible by $\Z(1)$ in $\DM^\eff$,
which is implied by (iii). Conversely, if $K_n^M/l\simeq C(1)$ for some $C\in \DM^\eff$,
Voevodsky's cancellation theorem \cite{voecan} shows that $C\simeq
\uHom(\Z(1), K_n^M/l)=\uHom(\G_m,K_n^M/l)[1]=(K_n^M/l)_{-1}[1]=K_{n-1}^M/l[1]$ (compare \cite[Prop. 4.3 and Rk. 4.4]{ky}).

For (iv) $\iff$ (v), we use  \cite[Rk. 4.6.3]{birat3} (see also
\cite[Rk. 2.2.6]{azumaya}): let $i^\o:\DM^\o\to \DM^\eff$ be the inclusion. For any $C\in C(\PST)$ which is $\A^1$-invariant and satisfies Nisnevich excision, and for any connected $Y\in \Sm(k)$ with function field $K$, one has a quasi-iso\-morph\-ism
\begin{equation}\label{eq3.3}
(i^\o\nu_{\le 0} C_\Nis)(Y)\simeq R\Gamma(\hat\Delta_K^*,C).
\end{equation}

For any $\sF\in \HI$, one has $H^q_\Nis(X,\sF)=0$ for $q\ne 0$ for any smooth semi-local $k$-scheme $X$ as a consequence of \cite[Th. 4.37]{voepre}. Therefore, the right hand side of \eqref{eq3.3} for $C=\sF[0]$ is
quasi-isomorphic to the complex associated to the simplicial abelian group
\[\sF(\hat\Delta_K^*),\]
which shows the equivalence of (iv) and (v) by taking $\sF=K_n^M/l$.
This concludes the proof.
\end{proof}

\begin{rk} In Proposition \ref{c5.1} b), (ii) is also (trivially) true for $n=0$, but not (iii) (see
\eqref{eq6.3}).
\end{rk}

\section{Elementary lemmas on Milnor $K$-groups}

Let $A$ be a commutative semi-local ring, and let $I$ be an ideal of $A$. We write $(1+I)^*=(1+I)\cap A^*=\Ker(A^*\to (A/I)^*)$. 

\begin{lemma}\label{l3.1} Assume that $|A/\fm|>2$ for all maximal ideals $\fm$ of $A$. Then, with the above notation:
\begin{thlist}
\item $A^*\to (A/I)^*$ is surjective.
\item Let $\bar a\in A/I$ be such that $\bar a,1-\bar a\in (A/I)^*$. Then there exists $a\in A$ such that $a\mapsto \bar a$ and $a,1-a\in A^*$.
\item Let $J$ be another ideal of $A$, with image $\bar J\subset A/I$. Then $(1+J)^*\to (1+\bar J)^*$ is surjective.
\end{thlist}
\end{lemma}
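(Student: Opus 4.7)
My plan is to prove all three parts by a Chinese Remainder Theorem avoidance argument on the finite set $\fm_1,\dots,\fm_r$ of maximal ideals of $A$. The general device is: if I want to modify $a_0\in A$ by an element of an ideal $K$ so that $a_0+x\in A^*$, then for $\fm_k\supset K$ the residue of $a_0+x$ modulo $\fm_k$ is forced to be that of $a_0$, whereas for $\fm_k\not\supset K$ one has $K+\fm_k=A$, and pairwise coprimality of the $\{\fm_k\}_k$ lets CRT freely prescribe the residues of $a_0+x$ modulo each such $\fm_k$.

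For (i), take any lift $a_0$ of $\bar u\in (A/I)^*$: the condition $a_0\notin \fm_k$ is automatic when $\fm_k\supset I$ (since $\bar u$ is already a unit mod $\fm_k$), and CRT applied with $K=I$ produces $x\in I$ with $a_0+x\not\equiv 0\pmod{\fm_k}$ for each $\fm_k\not\supset I$. Part (ii) is the same argument, except that for each $\fm_k\not\supset I$ I must avoid both $0$ and $1$ as residues (so that both $a$ and $1-a$ are units); this is exactly where the hypothesis $|A/\fm_k|>2$ is used, to guarantee the two forbidden classes do not exhaust the residue field.

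For (iii), given $\bar b=1+\bar j\in(1+\bar J)^*$, lift $\bar j$ to $j_0\in J$ (possible because $J\surj \bar J=(J+I)/I$) and set $b_0:=1+j_0\in 1+J$. The task becomes to find $x\in I\cap J$ with $b_0+x\in A^*$, the choice $x\in I\cap J$ preserving both the condition $b_0+x\in 1+J$ and its class modulo $I$. For $\fm_k\supset I$ we are done automatically, as before. Among the remaining maximal ideals, those with $\fm_k\supset J$ satisfy $j_0, x\in \fm_k$ and so $b_0+x\equiv 1\pmod{\fm_k}$, requiring no adjustment; and those with $\fm_k\not\supset J$ satisfy $\fm_k\not\supset I\cap J$ as well, since otherwise primality of $\fm_k$ applied to $IJ\subset I\cap J\subset \fm_k$ would force $I\subset \fm_k$ or $J\subset \fm_k$. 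CRT with $K=I\cap J$ then produces the required adjustment. The main subtle point is exactly this primality step: it guarantees that the ideal $I\cap J$ is large enough modulo precisely those maximal ideals where we actually need to modify $b_0$.
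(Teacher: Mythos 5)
Your proof is correct and is essentially the same as the paper's. The paper reduces modulo the Jacobson radical $R=\bigcap\fm_k$, observes that $A/R$ is a finite product of fields (which is exactly the CRT decomposition you invoke) where (i)--(iii) are immediate, and then lifts back using $1+R\subset A^*$; your version spells out the same idea as a direct CRT avoidance argument on the $\fm_k$, with the same use of $|A/\fm_k|>2$ in (ii) and the same primality observation for (iii).
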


\begin{proof} Let $R$ be the Jacobson radical of $A$, so that $1+R\subset A^*$. Assume first $R=0$: then $A$ is a finite product of fields and the three statements are obvious (the cardinality hypothesis is used in (ii)). The general case follows from chasing in the commutative square
\[\begin{CD}
A@>>> A/I\\
@VVV @VVV\\
A/R@>>> A/(R+I).
\end{CD}\]
\end{proof}

\begin{lemma}\label{l3.2} Keep the assumption of Lemma \ref{l3.1}. With the above notation, $K_*^M(A)\to K_*^M(A/I)$ is surjective with kernel the ideal generated by $(1+I)^*$.
\end{lemma}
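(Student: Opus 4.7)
The plan is to construct a two-sided inverse to the obvious surjection $K^M_*(A)/N \twoheadrightarrow K^M_*(A/I)$, where $N$ is the graded ideal of $K^M_*(A)$ generated by $(1+I)^*\subset K_1^M(A)=A^*$.

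First I would dispose of surjectivity and one containment. Surjectivity of $K_*^M(A)\to K_*^M(A/I)$ follows at once from Lemma \ref{l3.1}(i), since $K_*^M(A/I)$ is generated as a ring by $(A/I)^*$. The inclusion $N\subseteq \Ker(K_*^M(A)\to K_*^M(A/I))$ is immediate: elements of $(1+I)^*$ map to $1\in (A/I)^*$, hence to $0$ in $K_1^M(A/I)$.

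For the reverse containment I would construct a ring homomorphism $\phi:K_*^M(A/I)\to K_*^M(A)/N$ which sends the class of a symbol $\{\bar a_1,\dots,\bar a_n\}$ to the class of $\{a_1,\dots,a_n\}$ for any lift $a_j\in A^*$ of $\bar a_j$ (which exists by Lemma \ref{l3.1}(i)). Well-definedness in degree $1$ is the key point: if $a,a'\in A^*$ both lift $\bar a\in (A/I)^*$, then $a'a^{-1}\in (1+I)^*$, so $\{a'\}=\{a\}+\{a'a^{-1}\}\equiv \{a\}\pmod N$. Extending multiplicatively to the tensor algebra $T((A/I)^*)$ gives a well-defined homomorphism; it remains to check it kills the Steinberg relations. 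Given $\bar a\in A/I$ with $\bar a,1-\bar a\in (A/I)^*$, Lemma \ref{l3.1}(ii) furnishes a \emph{single} lift $a\in A$ with $a,1-a\in A^*$, and then $\{a,1-a\}=0$ in $K_2^M(A)$. By the degree-$1$ well-definedness above, this value is independent of the chosen lift, so the Steinberg relation in $A/I$ goes to $0$ in $K_*^M(A)/N$. Hence $\phi$ descends to $K_*^M(A/I)$.

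Finally, I would verify that $\phi$ is inverse to the induced map $\bar\pi:K_*^M(A)/N\to K_*^M(A/I)$. The composition $\bar\pi\circ\phi$ is the identity on generators by construction. For $\phi\circ\bar\pi$, a generator $\{a_1,\dots,a_n\}$ of $K_*^M(A)$ goes to $\{\bar a_1,\dots,\bar a_n\}$ and back to $\{a_1',\dots,a_n'\}$ for some (possibly other) lifts $a_j'\in A^*$ of $\bar a_j$; but the degree-$1$ well-definedness argument shows $\{a_1',\dots,a_n'\}\equiv \{a_1,\dots,a_n\}\pmod N$. Hence $\bar\pi$ is an isomorphism, i.e.\ $\Ker(K_*^M(A)\to K_*^M(A/I))=N$.

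The main obstacle is ensuring that the lift $\phi$ respects the Steinberg relations, and this is precisely what Lemma \ref{l3.1}(ii) is designed for: it lets one choose a \emph{single} element $a\in A$ lifting $\bar a$ such that both $a$ and $1-a$ are units, rather than having to adjust independent lifts of $\bar a$ and $1-\bar a$. Without the cardinality hypothesis $|A/\mathfrak m|>2$, Lemma \ref{l3.1}(ii) can fail and the argument breaks down.
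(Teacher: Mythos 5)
Your proof is correct and follows essentially the same route as the paper: both construct a ring homomorphism $T((A/I)^*)\to K_*^M(A)/N$ extending the canonical degree-one map, use Lemma~\ref{l3.1}(ii) to kill the Steinberg relations, and observe that the resulting section of $K_*^M(A)/N\twoheadrightarrow K_*^M(A/I)$ is an inverse. You simply make explicit the degree-one well-definedness and the two-sided inverse check that the paper leaves implicit in the phrase ``surjective section.''
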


\begin{proof} The first assertion follows from Lemma \ref{l3.1} (i). To prove the second one, let us construct a surjective section to the surjection
\[\frac{K_*^M(A)}{\{(1+I)^*\}K_*^M(A)}\Surj K_*^M(A/I).\]

It suffices to show that the surjective ring homomorphism $T((A/I)^*)\allowbreak\surj K_*^M(A)/\{(1+I)^*\}K_*^M(A)$ extending the identity map in degree $1$ kills the Steinberg relations: this follows from Lemma \ref{l3.1} (ii).
\end{proof}

\begin{prop}\label{p3.1} Keep the assumption of Lemma \ref{l3.1}, and let $I,J$ be two ideals of $A$. Then the sequence
\[K_*^M(A)\to K_*^M(A/I)\oplus K_*^M(A/J)\to K_*^M(A/I+J)\to 0\]
is exact.
\end{prop}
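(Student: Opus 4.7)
The plan is to build the proof entirely from Lemmas \ref{l3.1} and \ref{l3.2}. Surjectivity of the right map is immediate: applying Lemma \ref{l3.2} to the pair $(A/I,\bar J)$ with $\bar J = (I+J)/I$ shows that $K_*^M(A/I)\to K_*^M(A/(I+J))$ is already surjective.

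For exactness in the middle, let $(x,y)\in K_*^M(A/I)\oplus K_*^M(A/J)$ have the same image in $K_*^M(A/(I+J))$. Using Lemma \ref{l3.2} (surjectivity for the pair $(A,I)$), lift $x$ to some $\tilde x\in K_*^M(A)$ and replace $(x,y)$ by $(0,y-\bar{\tilde x})$, where $\bar{\tilde x}$ denotes the image of $\tilde x$ in $K_*^M(A/J)$. This reduces the problem to the following claim: if $y\in K_*^M(A/J)$ maps to $0$ in $K_*^M(A/(I+J))$, then $y$ lifts to some $\tilde y\in K_*^M(A)$ whose image in $K_*^M(A/I)$ is $0$.

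To prove this claim, apply Lemma \ref{l3.2} to the pair $(A/J,\bar I)$ with $\bar I=(I+J)/J$: the kernel of $K_*^M(A/J)\to K_*^M(A/(I+J))$ is the ideal generated by $(1+\bar I)^*$, so $y$ can be written as a finite sum of symbols of the form $\{u_\alpha,b_2^\alpha,\dots,b_n^\alpha\}$ with $u_\alpha\in (1+\bar I)^*$ and $b_i^\alpha\in (A/J)^*$. Now Lemma \ref{l3.1}(iii) lets us lift each $u_\alpha$ to some $\tilde u_\alpha\in (1+I)^*\subset A^*$, and Lemma \ref{l3.1}(i) lets us lift each $b_i^\alpha$ to $\tilde b_i^\alpha\in A^*$. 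Setting
\[\tilde y=\sum_\alpha \{\tilde u_\alpha,\tilde b_2^\alpha,\dots,\tilde b_n^\alpha\}\in K_*^M(A),\]
we see $\tilde y$ maps to $y$ in $K_*^M(A/J)$ by construction, and it maps to $0$ in $K_*^M(A/I)$ because each $\tilde u_\alpha$ has image $1$ in $(A/I)^*$.

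The only delicate point, and the reason the cardinality hypothesis of Lemma \ref{l3.1} really matters, is the compatible lifting provided by Lemma \ref{l3.1}(iii): one must lift the unipotent units in $(1+\bar I)^*\subset (A/J)^*$ not merely to elements of $A^*$ but to elements of $(1+I)^*$, so that they become trivial after reduction modulo $I$. Once that compatibility is in hand, the verification is a direct manipulation of generators of the ideal described by Lemma \ref{l3.2}.
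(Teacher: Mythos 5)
Your proof is correct and is essentially the paper's own argument, merely unpacked. The paper organizes the same content as a diagram chase in the commutative diagram whose rows (by Lemma \ref{l3.2}) are
\[
\{(1+I)^*\}K_*^M(A) \to K_*^M(A)\to K_*^M(A/I) \to 0
\]
and
\[
\{(1+\bar I)^*\}K_*^M(A/J) \to K_*^M(A/J)\to K_*^M(A/(I+J)) \to 0,
\]
with vertical reduction maps, noting that the left vertical arrow is surjective by Lemma \ref{l3.1}(iii); your steps (lift $x$, subtract, identify the kernel of $K_*^M(A/J)\to K_*^M(A/(I+J))$, and lift generators of that kernel compatibly into $(1+I)^*$) are exactly the chase made explicit on generators.
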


\begin{proof} Let $\bar I$ be the image of $I$ in $A/J$. Consider the commutative diagram
\[\begin{CD}
\{(1+I)^*\}K_*^M(A) @>>> K_*^M(A)@>>> K_*^M(A/I) @>>> 0\\
@VVV @VVV @VVV \\
\{(1+\bar I)^*\}K_*^M(A/J) @>>> K_*^M(A/J)@>>> K_*^M(A/I+J) @>>> 0.
\end{CD}\]

By Lemma \ref{l3.2}, the rows are exact and the middle and right vertical maps are surjective; by Lemma \ref{l3.1} (iii), the left vertical map is also surjective. The claim now follows from a diagram chase.
\end{proof}

\section{End of proof of Theorem \ref{T1}}

\begin{lemma}\label{l5.3} Let $\SLoc$ be the category of semi-local $K$-schemes. Let $F$ be a contravariant functor from $\SLoc$ to abelian groups. Suppose that, for any $X\in \SLoc$ and any closed cover $X=Z_1\cup Z_2$, the sequence
\[0\to F(X)\to F(Z_1)\oplus F(Z_2)\to F(Z_1\cap Z_2)\]
is exact. (Here, $Z_1\cap Z_2:=Z_1\times_X Z_2$ is the scheme-theoretic intersection.) Then, for any closed cover $X=Z_1\cup\dots \cup Z_r$, the sequence
\[0\to F(X)\to \bigoplus_{j=1}^rF(Z_j)\to \bigoplus_{j<k}F(Z_j\cap Z_k)\]
is exact.
\end{lemma}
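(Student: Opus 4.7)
The plan is to induct on $r$. The case $r=1$ is trivial (the first map is the identity) and $r=2$ is exactly the hypothesis. For the inductive step, set $W = Z_1 \cup \dots \cup Z_{r-1}$, so $X = W \cup Z_r$ is a two-term closed cover whose scheme-theoretic intersection $W \cap Z_r$ is in turn covered by the $r-1$ closed subschemes $Z_j \cap Z_r$ for $j < r$. Both $W$ and $W \cap Z_r$ are closed subschemes of the semi-local scheme $X$, hence lie in $\SLoc$, so the induction hypothesis applies to each.

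I would then feed three inputs into a diagram chase: the induction hypothesis in its full form on the cover $W = \bigcup_{j<r} Z_j$, the injectivity part of the induction hypothesis on the cover $W \cap Z_r = \bigcup_{j<r}(Z_j \cap Z_r)$, and the two-term Mayer-Vietoris assumption applied to $X = W \cup Z_r$.

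Injectivity is immediate: if $\xi \in F(X)$ restricts to zero on every $Z_j$, then $\xi|_W$ vanishes by the inductive injectivity on $W$, and $\xi|_{Z_r} = 0$ by hypothesis, so $\xi = 0$ by the two-term exact sequence. For exactness at the middle: given a cocycle $(\alpha_j)_{j=1}^r$, the truncation $(\alpha_1,\dots,\alpha_{r-1})$ is a cocycle for $W$'s cover, hence comes from some $\beta \in F(W)$ by induction. The elements $\beta|_{W \cap Z_r}$ and $\alpha_r|_{W \cap Z_r}$ of $F(W \cap Z_r)$ agree on each $Z_j \cap Z_r$ (both reduce to $\alpha_j|_{Z_j \cap Z_r} = \alpha_r|_{Z_j \cap Z_r}$ via the cocycle relations), so by the injectivity part of induction applied to $W \cap Z_r$ they coincide. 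The two-term hypothesis applied to $X = W \cup Z_r$ then produces the desired $\gamma \in F(X)$ with $\gamma|_W = \beta$ and $\gamma|_{Z_r} = \alpha_r$; automatically $\gamma|_{Z_j} = \alpha_j$ for all $j$.

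The main technical point I expect will be the bookkeeping of the scheme structure on $W \cap Z_r$: scheme-theoretically one only has an inclusion $\bigcup_{j<r}(Z_j \cap Z_r) \subseteq W \cap Z_r$, with equality amounting to a distributivity property $(\bigcap_{j<r} I_{Z_j}) + I_{Z_r} = \bigcap_{j<r}(I_{Z_j} + I_{Z_r})$ of the defining ideals. This holds in the intended application (coordinate faces of $\hat\Delta^q$), and the statement of the lemma should be read so that the hypothesis on $F$ genuinely applies to this subcover of $W \cap Z_r$. Modulo this, the induction is entirely formal.
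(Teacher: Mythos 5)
Your proof is correct and follows essentially the same route as the paper: set $Y=W=Z_1\cup\dots\cup Z_{r-1}$, apply the two-term hypothesis to $X=Y\cup Z_r$, apply the inductive hypothesis to the cover of $Y$, and apply the injectivity part of the inductive hypothesis to the cover of $Y\cap Z_r$ by the $Z_j\cap Z_r$; the paper then concludes by a chase in a $2\times 3$ diagram comparing the sequences for $X$ and for $Y$, which is the same chase you carry out at the level of elements. Your closing remark about the scheme structure on $Y\cap Z_r$ is a genuine subtlety that the paper passes over silently (it is content to call the lemma ``much more general'' and only spell out the chase): the induction step tacitly uses that $Y\cap Z_r=\bigcup_{j<r}(Z_j\cap Z_r)$ as schemes, i.e.\ the distributivity $\bigl(\bigcap_{j<r}I_j\bigr)+I_r=\bigcap_{j<r}(I_j+I_r)$, which holds for the coordinate faces of $\hat\Delta^q_K$ used in the application but not for arbitrary closed covers; flagging this is a worthwhile addition, not a defect of your argument.
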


\begin{proof} Of course this lemma is much more general and the point is to spell out its proof. Let $Y=Z_1\cup\dots Z_{r-1}$. By hypothesis, the sequence
\[0\to F(X)\to F(Y)\oplus F(Z_r)\to F(Y\cap Z_r)\]
is exact and, by induction on $r$, the map
\[F(Y\cap Z_r)\to \bigoplus_{j<r} F(Z_j\cap Z_r)\]
is injective. The conclusion follows by chasing in the diagram
\[\begin{CD}
0@>>> F(X)@>>>\displaystyle \bigoplus_{j=1}^rF(Z_j)@>>>\displaystyle \bigoplus_{j<k}F(Z_j\cap Z_k)\\
&&@VVV @VVV @VVV\\
0@>>> F(Y)@>>>\displaystyle \bigoplus_{j=1}^{r-1}F(Z_j)@>>>\displaystyle \bigoplus_{j<k<r}F(Z_j\cap Z_k).
\end{CD}\]
\end{proof}

\begin{lemma}[See also \protect{\cite[Lemma 2.4]{levine2}}]\label{l6.1}
Let $A_*=(A_n)_{n\ge 0}$ be a simplicial abelian group. Let $(A_n^0)$ be the normalised complex of $A$: $A_r^0= \bigcap_{i>0} \Ker (\partial_i:A_r\to A_{r-1})$. For $q>0$, consider the commutative diagram of complexes
\begin{equation}\label{eq6.1}
\begin{CD}
A_{q+1}^0 @>\partial_0>> A_q^0 @>\partial_0>> A_{q-1}^0\\
@V\alpha VV @V\beta VV @V\gamma VV \\
A_{q+1}@>a>> \displaystyle\bigoplus_{i=0}^{q} A_q@>{b}>>  \displaystyle\bigoplus_{0\le j<k\le q} A_{q-1}
\end{CD}
\end{equation}
where $\alpha$ is inclusion, $\beta(x)=(x,0,\dots,0)$,  $\gamma(x)_{j,k}= \begin{cases} x&\text{si $(j,k)=(0,1)$}\\ 0& \text{else,} \end{cases}$ $a=(\partial_i)_{0\le i\le q}$ and the $(i,j,k)$-component of $b$ is
\[\begin{cases}
0 &\text{if $i\ne j,k$}\\
\partial_{k-1} &\text{if $i=j$}\\
-\partial_{j} &\text{if $i=k$.}
\end{cases}\] 
Then this diagram induces an injection on homology.
\end{lemma}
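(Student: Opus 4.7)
The claim is that the vertical chain map in \eqref{eq6.1} is injective on homology. I focus on the middle position, where the content lies; the leftmost position is essentially tautological, since $\alpha$ restricted to cycles identifies $Z_{q+1}(N_*)$ with a subgroup of $\ker a$, and the rightmost position is handled by a parallel argument.

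So suppose $x\in A_q^0$ is a cycle ($\partial_0 x=0$) with $\beta(x)$ a boundary in the bottom complex, and choose $w\in A_{q+1}$ with $a(w)=\beta(x)$. Concretely, $\partial_0 w=x$ and $\partial_i w=0$ for $1\le i\le q$. My goal is to upgrade $w$ to some $y\in A_{q+1}^0$ with $\partial_0 y=x$, which will give $[x]=0$ in $H_q(A_*)$. Set $z:=\partial_{q+1}w\in A_q$: this is the one potentially obstructing face. The simplicial identities $\partial_i\partial_{q+1}=\partial_q\partial_i$ (valid for $0\le i\le q$) combined with $\partial_i w=0$ for $1\le i\le q$ give $\partial_i z=0$ for $1\le i\le q$, so $z\in A_q^0$; and $\partial_0 z=\partial_q x=0$ since $x\in A_q^0$. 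Hence $z$ is itself a cycle in the normalised complex.

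I would then apply iterated Moore normalisation to $w$: replace $w$ successively by $w-s_q z$, then add $s_{q-1}z$, subtract $s_{q-2}z$, and so on, at each step cancelling the current nonzero face using the appropriate degeneracy of $z$. Using $\partial_i s_j=\mathrm{id}$ when $i\in\{j,j+1\}$, $\partial_i s_j=s_{j-1}\partial_i$ for $i<j$, and $\partial_i s_j=s_j\partial_{i-1}$ for $i>j+1$ (together with $\partial_i z=0$ for every $i$), a direct face-by-face check shows that after $q+1$ iterations the element
\[ y\;:=\;w\,+\,(-1)^{q+1}\sum_{i=0}^{q}(-1)^{i}s_{i}z \]
lies in $A_{q+1}^0$. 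Tracking the remaining face gives $\partial_0 y=x+(-1)^{q+1}z$, which translates into the identity $[x]=(-1)^q[z]$ in $H_q(A_*)$.

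The main obstacle is then to deduce $[z]=0$. I would exploit the freedom in the choice of $w$: it is determined only up to $\ker a$, and adding $w_0\in\ker a$ to $w$ replaces $z$ by $z+\partial_{q+1}w_0$. The element $w':=\sum_{i=0}^{q}(-1)^{i}s_{i}z\in A_{q+1}$ satisfies $a(w')=\beta(z)$ and $\partial_{q+1}w'=(-1)^{q}z$ (again by the $\partial_i s_j$ formulae together with $\partial_i z=0$ for all $i$), which is the tool for comparing $w$ with a suitable modification realising $-z$ as an element of $\partial_{q+1}(\ker a)$. Once this identification is made, a new choice of $w$ has $\partial_{q+1}=0$, so the Moore-normalisation already described lands in $A_{q+1}^0$ with $\partial_0 y=x$ exactly, completing the injectivity argument at the middle position. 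The rightmost position of the diagram is handled by the same Moore-normalisation applied to an element of the matching kernel $\ker b$ in place of $w$.
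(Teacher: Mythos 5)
Your strategy cannot close, and in fact the statement as literally printed is false, so no proof can close it. Your computation that a filler $w$ of $\beta(x)$ has $\partial_i w=0$ for $1\le i\le q$, that $z:=\partial_{q+1}w$ is a cycle of the normalised complex, and that the Moore-normalised $y=w+(-1)^{q+1}\sum_{i=0}^q(-1)^is_iz$ lies in $A_{q+1}^0$ with $\partial_0y=x+(-1)^{q+1}z$, is all correct. But the resulting identity $[x]=(-1)^q[z]$ is \emph{circular}, not a vanishing. The element $w'=\sum_{i=0}^q(-1)^is_iz$ you bring in to ``realise $-z$ as an element of $\partial_{q+1}(\ker a)$'' does not lie in $\ker a$: you computed $a(w')=\beta(z)\ne0$ yourself. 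There is no way to force $[z]=0$. Concretely, take $A_*=\Z[S^1]$ and $q=1$: then $H_1$ of the normalised complex is $\Z$, yet for the generator $x$ of $N_1$ one has $\beta(x)\in\operatorname{im}a$ (the horn $\Lambda^2_2$ is fillable since simplicial abelian groups are Kan), with $z=-x$; so the map on middle homology kills $x$ and the asserted injectivity fails. The same Kan-filling argument shows that, for \emph{any} simplicial abelian group, the bottom row as printed is automatically exact in the middle, so ``injective on middle homology'' would force $\pi_q(A_*)=0$, which is absurd.

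The resolution is that the bottom row in \eqref{eq6.1} is misindexed: it should run over \emph{all} $q+2$ faces of $A_{q+1}$, i.e.\ read $A_{q+1}\xrightarrow{a}\bigoplus_{i=0}^{q+1}A_q\xrightarrow{b}\bigoplus_{0\le j<k\le q+1}A_{q-1}$ with $a=(\partial_i)_{0\le i\le q+1}$ and $b$ defined by the same formula. This is forced by the only use of the lemma: the closed Mayer--Vietoris sequence obtained from Proposition~\ref{p3.1} and Lemma~\ref{l5.3} is over the full cover of $\partial\hat\Delta_K^{q+1}$ by its $q+2$ faces, hence produces a middle term with $q+2$ summands. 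With the indices corrected the middle-position injectivity really is ``obvious'': if $\beta(x)=a(w)$ then $\partial_iw=0$ for \emph{every} $1\le i\le q+1$, so $w\in A_{q+1}^0$ already, and $\partial_0w=x$ exhibits $[x]=0$ with no normalisation step at all (and the leftmost position is handled by the injectivity of $\alpha$, the rightmost by the analogous diagram one step to the right). So the gap in your argument is not a technical one you could have patched; you were trying to prove a false statement, and the intended (corrected) statement has the one-line proof just described.
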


\begin{proof} Obvious.
\end{proof}

\begin{proof}[End of proof of Theorem \ref{T1}] We saw in \S \ref{s1} that (i) $\Rightarrow$ (iii); we have  (i) $\iff$ (ii) by the equivalence (i) $\iff$ (iii) in Proposition \ref{c5.1} b). Obviously, (iii) $\Rightarrow$ (iv). It remains to show that (iv) $\Rightarrow$ (i).

Suppose that (iv) holds in Theorem \ref{T1}. In view of Proposition \ref{p3.1} and (the proof of) Lemma \ref{l5.3}, we get for all $q>0$ an exact sequence
\[0\to K_n^M(\partial\hat\Delta_K^q)/l\to \bigoplus_{j=0}^qK_n^M(\hat\Delta_{K,j}^q)/l\to \bigoplus_{j<k}K_n^M(\hat\Delta_{K,j,k}^q)/l.\]

But $K_n^M(\hat \Delta^q_K)\to K_n^M(\partial\hat\Delta_K^q)$ is surjective, hence the bottom row of \eqref{eq6.1} is exact for $A_*=K_*^M(\hat\Delta_K^*)/l$. By Lemma \ref{l6.1}, Condition (v) of Proposition \ref{c5.1} b) holds, and therefore so does its Condition (i). This concludes the proof. 
\end{proof}

\begin{rk}\label{r3.1} For $A_*=K_*^M(\hat\Delta_K^*)/l$, the homology group
of the bottom row of \eqref{eq6.1} may be reinterpreted in a more suggestive way: it is
\[\Coker\left(H^0_\Zar(\hat\Delta_K^{i+1},\sF)\to 
H^0_\oc(\partial\hat\Delta_K^{i+1},\sF)\right)\]
where $\oc$ denotes the open-closed topology introduced in \cite{glrev}.
\end{rk}

\end{document}